\newtheorem{Theorem}{Theorem}[section]
\newtheorem{Proposition}[Theorem]{Proposition}
\theoremstyle{definition}
\title{On Lelong Numbers of Positive Closed Currents on $\mathbb{P}^n$}
\author{James J. Heffers}
\subjclass[2010]{Primary 32U25; Secondary 32U05, 32U40}
\keywords{Positive closed currents, Lelong numbers, Plurisubharmonic functions}
\begin{document}

\maketitle

\author

\begin{center}

\textit{Syracuse University \\
215 Carnegie Building, Syracuse University, Syracuse N.Y. 13422\\
e-mail: jjheffer@syr.edu}

\end{center}

\begin{abstract}
\noindent Let $T$ be a positive closed current of bidimension $(p,p)$ with unit mass on the complex projective space $\mathbb P^n$. For certain values of $\alpha$ and $\beta = \beta(p, \alpha)$ we show that if $T$ has enough points where the Lelong number is at least $\alpha$, then the upper level set $E_{\beta}^+ (T)$ of points where $T$ has Lelong number strictly larger than $\beta$ has certain geometric properties.  
\end{abstract}

\section{Introduction}

Let $T$ be a positive closed current of bidimension $(p,p)$ on $\mathbb{P}^n$ which has mass $\|T\|=1$, where
$$\|T\| := \int_{\mathbb{P}^n} T \wedge \omega_n^p $$

 \noindent and $\omega_n$ is the Fubini-Study form on $\mathbb{P}^n$.  We consider the following upper level sets of Lelong numbers $\nu( T, q)$ of the current $T$

\begin{center}

$E_{\alpha} (T) = \{q \in \mathbb{P}^n \, | \, \nu(T,q) \geq \alpha \},$

$E_{\alpha}^+ (T) = \{q \in \mathbb{P}^n \, | \, \nu(T,q) > \alpha \}.$

\end{center}

It has been shown by Siu \cite{Siu74} that $E_{\alpha} (T)$ is an analytic subvariety of dimension at most $p$ when $\alpha >0$.  Our goal is to gain more understanding of the geometric properties of these upper level sets by attempting to generalize some of the results for bidimension $(1,1)$ currents laid out in \cite{C06} and \cite{H17} to analogous results for bidimension $(p,p)$ currents by utilizing the tools given to us by Coman and Truong in \cite{CT15}.  We start first by generalizing \cite[Theorem 3.10]{C06}, which states that given a bidimension $(1,1)$ positive closed current $T$,  $\alpha >\frac{1}{2}$ , $\beta = (2-\alpha)/3$, and two points $q_1, q_2 \in \mathbb{P}^n$ such that $\nu(T,q_i)\geq\alpha$, then $E_{\beta}^+(T)$ can be contained in a line, with the exception of at most one point.  In doing so we find that $\beta$ depends on both $p$ and $\alpha$ to get the following:

\begin{Theorem}\label{T:mt1} Let $T$ be a positive closed current of bidimension $(p,p)$ on $\mathbb{P}^n$, $0< p <n$, $\|T\|=1$, $\alpha > \frac{p}{p+1}$ and $\beta = \frac{p^2 + p - \alpha}{p(p+2)}$.  Let $q_1, q_2$ be points in $\mathbb{P} ^{n}$ such that $\nu (T, q_{i}) \geq \alpha$.  Then either $E_{\beta}^+(T)$ is contained in a $p$-dimensional linear subspace or there exists a complex line $L$ such that $| E_{\beta }^+ (T) \backslash L | = p$. 
\end{Theorem}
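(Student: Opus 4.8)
The plan is to mimic the structure of the bidimension $(1,1)$ argument from \cite{C06}, but replace the use of lines/projective subspaces with the higher-dimensional slicing and intersection machinery from \cite{CT15}. The key classical input is a mass-comparison inequality: if $q_1,q_2$ are points with $\nu(T,q_i)\geq\alpha$, and if we consider the pencil of $p$-dimensional linear subspaces (or the unique line $L$ through $q_1,q_2$), then the Lelong numbers at $q_1,q_2$ force a lower bound on the mass of $T$ concentrated along appropriate linear subspaces containing these points. Because $\|T\|=1$ is fixed, this caps how much Lelong mass can be distributed elsewhere, and that is precisely what restricts the geometry of $E_\beta^+(T)$.

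First I would set up, following Coman--Truong, the notion of the mass of $T$ along a $p$-dimensional linear subspace and record the basic inequality relating $\nu(T,q)$ to the mass of the slice of $T$ by subspaces through $q$. The crucial estimate is the one controlling, for a point $q$ with large Lelong number, how the current behaves under projection from $q$ onto $\mathbb{P}^{n-1}$; this projection drops the bidimension appropriately and converts Lelong numbers of $T$ into Lelong numbers of the projected current, at which point one can hope to induct on $p$ or invoke the $(1,1)$ result at the base. Second, I would argue by contradiction: suppose $E_\beta^+(T)$ is \emph{not} contained in any $p$-dimensional linear subspace, so there are points in general enough position, and suppose also that no line $L$ leaves out exactly $p$ of them. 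The aim is to exhibit too many points with Lelong number exceeding $\beta$, each contributing a definite amount of mass that, together with the $\alpha$-mass forced at $q_1,q_2$, exceeds $\|T\|=1$.

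The heart of the quantitative step is the specific choice $\beta=\frac{p^2+p-\alpha}{p(p+2)}$, which must be calibrated so that the mass bookkeeping closes exactly. Concretely, I expect one uses an inequality of the shape $\nu(T,q_1)+\nu(T,q_2)+\sum_j \nu(T,r_j)\leq C(p)\,\|T\|$ valid whenever the points $q_1,q_2,r_j$ are in suitably general position relative to some linear subspace, where the constant $C(p)$ comes from the combinatorics of how many independent directions a bidimension $(p,p)$ current can ``see.'' Plugging $\nu(T,q_i)\geq\alpha$ and $\nu(T,r_j)>\beta$ into this inequality and comparing against $1$ is what pins down the threshold $\beta$; the hypothesis $\alpha>\frac{p}{p+1}$ should be exactly the condition ensuring the resulting inequality is strict and hence yields a contradiction once enough $r_j$ are present. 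I would carry out the casework on the number of ``stray'' points of $E_\beta^+(T)$ off a candidate line $L$, showing that $p+1$ or more such points cannot coexist with the mass already committed at $q_1,q_2$.

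The main obstacle, I anticipate, is the geometric casework: ruling out configurations is easy when the excess points are in general position, but degenerate configurations (several excess points lying on a common lower-dimensional subspace through $q_1$ or $q_2$, or collinear with them) require a more careful localized mass estimate rather than the crude global sum. Handling these degeneracies—showing that even a clustered arrangement of high-Lelong-number points overspends the unit mass—is where the Coman--Truong slicing results will have to be applied most delicately, and where the precise numerology of $\beta$ as a function of $p$ and $\alpha$ is genuinely used rather than merely bounded.
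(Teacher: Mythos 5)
There is a genuine gap: your plan never identifies the one device that actually makes this theorem tractable, namely the construction of auxiliary currents. The paper's proof does not establish any new mass-comparison inequality for $T$ itself. Instead, given stray points $x_1,\dots,x_{p+1}$ of $E_\beta^+(T)$, it picks $\alpha'$ with $\frac{p}{p+1}<\alpha'<\alpha$, takes a $p$-dimensional subspace $V_2$ through the $x_i$, and forms the unit-mass positive closed current
$$R \;=\; \frac{(p+1)\alpha'-p}{(p+1)\alpha'}\,[V_2] \;+\; \frac{p}{(p+1)\alpha'}\,T .$$
Adding the current of integration $[V_2]$ inflates the Lelong numbers at the $x_i$ above the threshold $\frac{p}{p+1}$, while the rescaling keeps $\nu(R,q_i)>\frac{p}{p+1}$; the formula $\beta=\frac{p^2+p-\alpha}{p(p+2)}$ is exactly the value that makes this arithmetic close, and $\alpha>\frac{p}{p+1}$ is what makes the coefficient of $[V_2]$ positive. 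One then applies \cite[Theorem 1.2]{CT15} to $R$ as a black box to produce a line through three of the points $q_1,q_2,x_1,\dots,x_{p+1}$, and the rest of the proof is combinatorial casework (with a second auxiliary current built from a sum of subspaces $[U_i]$) exploiting linear independence and general position. Nothing in this argument involves slicing, projection from a point, or induction on $p$.

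By contrast, the load-bearing step of your proposal is an unproved and unspecified inequality $\nu(T,q_1)+\nu(T,q_2)+\sum_j\nu(T,r_j)\leq C(p)\,\|T\|$ for points in general position. You never say what $C(p)$ is, how the geometry enters, or how it would be proved; for bidimension $(p,p)$ currents such sum estimates are precisely the hard content of \cite{CT15}, obtained there via singular potentials and intersection theory, so assuming one "of the shape" you describe amounts to assuming the result you need. The projection-from-$q$ and induction-on-$p$ scheme is likewise left entirely unexecuted (pushing forward a bidimension $(p,p)$ current under projection does not convert Lelong numbers with usable constants without substantial work), and you acknowledge but do not resolve the degenerate configurations. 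As written, the proposal is a research plan whose central quantitative step is missing, not a proof; the correct and much shorter route is the auxiliary-current trick that reduces everything to a citation of \cite[Theorem 1.2]{CT15}.
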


\smallskip

The lower bound on $\alpha$ and the fact that we allow for $p$ points to be omitted from the line, while seemingly an arbitrary choice, comes from the conclusion of \cite[Theorem 1.2]{CT15}, stated in the next section.  At the end of the third section, we will investigate two examples to show that this $\beta$ value is sharp for this property, and that the assumption of needing two points $q_1, q_2$ where the current has ``large" Lelong number is necessary.  We also will generalize \cite[Theorem 3.12]{C06}, in which Coman showed that for a bidimension $(1,1)$ current $T$ and $\alpha \geq 1/2$, if the set $E_{1-\alpha}$ contained three collinear points on some line $L$, then $E_\alpha^+(T)$ is contained on $L$ with the exception of at most one point.

\smallskip

We then turn our attention to generalizing \cite[Proposition 3.6]{H17}, which was originally proved only for bidimension $(1,1)$ currents on $\mathbb{P}^2$, by proving the following:

\begin{Theorem}\label{P:33}  Let $T$ be a positive closed current of bidimension $(1,1)$ on $\mathbb{P}^n$ with $\|T\|=1$. Let $\{ q_{i} \}_{i=1}^{4}$ be collinear points in $\mathbb{P} ^{n}$ and $\nu (T, q_{i}) \geq \alpha > 2/5$.  Let $\beta = \frac{2}{3} (1- \alpha)$.  Then there exists two lines $L_1$, $L_2$ such that $|E_{\beta}^+(T)\backslash (L_1\cup L_2)| \leq 1$.
\end{Theorem}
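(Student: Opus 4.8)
The plan is to peel off the line $L_0$ through the four collinear points and reduce to a small-mass current. Let $L_0$ be the line containing $q_1, \dots, q_4$, let $c := \nu(T, L_0)$ be the generic Lelong number of $T$ along $L_0$, and use Siu's decomposition to write $T = c[L_0] + R$ with $R$ positive closed of bidimension $(1,1)$, $\nu(R, L_0) = 0$, and $\|R\| = 1 - c$. For $q \in L_0$ we have $\nu(T, q) = c + \nu(R, q)$, so $\nu(R, q_i) \ge \alpha - c$. The first tool I would invoke is the line-sum estimate coming from the intersection theory of \cite{CT15}: for any line $M$, writing $R = \nu(R,M)[M] + R_M$, the restriction gives $\sum_{q \in M} \nu(R_M, q) \le \|R_M\|$ (on $\mathbb{P}^n$ this follows by pushing $R_M$ forward under a generic linear projection to $\mathbb{P}^2$ and using the intersection number with a line). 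Applying this on $L_0$ (where $R_{L_0} = R$) to the four points yields $4(\alpha - c) \le 1 - c$, hence
\[
c \ge \frac{4\alpha - 1}{3}, \qquad \|R\| = 1 - c \le \tfrac{4}{3}(1 - \alpha) = 2\beta .
\]
Note also that $\alpha > 2/5$ is exactly the inequality $\alpha > \beta$, which I would use to guarantee the four given points genuinely lie in $E_\beta^+(T)$ and that the Lelong gaps appearing below are positive.

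Next I would set $L_1 := L_0$ and analyze $S := E_\beta^+(T) \setminus L_0 = \{q \notin L_0 : \nu(R,q) > \beta\}$, the points a second line must capture. The line-sum estimate with its divisorial correction reads $\nu(R,x) + \nu(R,y) \le \|R\| + \nu(R,M)$ for distinct $x, y$ on a line $M$. Since $\|R\| \le 2\beta$, any two points $x, y \in S$ force $\nu(R,x) + \nu(R,y) > 2\beta \ge \|R\|$, so the line through them carries positive generic Lelong number $\nu(R, M) > 0$; call such lines heavy. Thus the points of $S$ pairwise span heavy lines. Because distinct heavy lines contribute disjointly to the mass via Siu's decomposition, $\sum_{M \text{ heavy}} \nu(R, M) \le \|R\|$, and each heavy line $M$ meets $L_0$ in a single point at which $R$ has Lelong number at least $\nu(R, M)$.

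The heart of the argument is then a mass-budget count run simultaneously on $L_0$. Applying the line-sum estimate on $L_0$ to the four points $q_i$ together with the crossing points where the heavy lines spanned by configurations of $S$ meet $L_0$, I would rule out any subconfiguration of $S$ that cannot be covered by a single line. Concretely, assuming for contradiction that no line leaves at most one point of $S$ uncovered produces (at least) four points of $S$ in general position; their six spanning heavy lines have total generic Lelong number bounded above by $\|R\|$ minus the contribution $4(\alpha - c)$ of the $q_i$, while the pairwise line-sum estimates bound it below, and these two bounds become incompatible once $c > (4\alpha-1)/3$. Taking $L_2$ to be the heavy line carrying all but one point of $S$ then gives $|E_\beta^+(T) \setminus (L_1 \cup L_2)| \le 1$.

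The main obstacle I anticipate is the borderline case $c = (4\alpha - 1)/3$, equivalently $\|R\| = 2\beta$, where the estimate on $L_0$ is saturated: here every heavy line is forced to meet $L_0$ in one of the four points $q_i$, so the heavy lines become highly concurrent and the crude mass count degenerates. In this rigid regime I would exploit the sharpened values $\nu(R,q_i) = (1-\alpha)/3 = \beta/2$ and $\nu(R,M) \le \beta/2$ for each heavy line through a $q_i$, pushing the surplus Lelong number $\nu(R,x) - \nu(R,M) > \beta/2$ of each $x \in S$ into the residual currents $R_M$ and iterating the line-sum estimate, to show that $S$ still collapses onto a single heavy line up to one exceptional point. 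Making this degenerate count precise, and verifying that exactly one exceptional point (rather than none or two) is the sharp outcome, is where I expect the real work to lie.
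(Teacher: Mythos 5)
Your first half is essentially the paper's own argument: Siu decomposition $T = c[L_0] + R$ along the line through the four points, followed by the intersection estimate $4(\alpha - c) \le 1 - c$, giving $c \ge (4\alpha-1)/3$ and $\|R\| = 1-c \le \tfrac{4}{3}(1-\alpha) = 2\beta$. (The paper justifies this estimate with more care than your one-line appeal to a generic projection: it uses Meo's theorem \cite{Me98} to replace the bidimension-$(1,1)$ current $R$ by a bidegree-$(1,1)$ current, Demailly regularization \cite{D92} via Proposition 2.5, the Forn\ae ss--Sibony wedge product $S' \wedge [L_0]$ \cite{FS95}, and Demailly's comparison of Lelong numbers \cite{D93}; but the estimate itself is correct and is the same bound.)

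The genuine gap is in your second half. At this point the paper simply normalizes, $R' := R/(1-c)$, and observes that for every $x \in E_\beta^+(T)\setminus L_0$ one has $\nu(T,x)=\nu(R,x)$, hence $\nu(R',x) > \beta/(1-c) \ge \beta/(2\beta) = 1/2$; then Coman's theorem \cite[Theorem 1.1]{C06} (Theorem 2.2 of the paper), applied to the unit-mass current $R'$, immediately yields a line $L_2$ with $|E_{1/2}^+(R')\setminus L_2| \le 1$, finishing the proof. Your attempt to replace this citation by a heavy-line mass count cannot work as described, for two concrete reasons. First, the count produces no contradiction even away from the borderline case: for four points $x_1,\dots,x_4$ of $S$ in general position, the pair estimate gives $\nu(R,M_{ij}) \ge \nu(R,x_i)+\nu(R,x_j)-\|R\|$, and summing over the six spanning lines gives the lower bound $3\sum_i \nu(R,x_i) - 6\|R\| > 12\beta - 12\beta = 0$, which is perfectly compatible with the Siu upper bound $\sum_{i<j}\nu(R,M_{ij}) \le \|R\| \le 2\beta$; taking $\nu(R,x_i) = \beta + \epsilon$ with $\epsilon$ small shows the two bounds you hope to play against each other never collide. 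Second, your budget on $L_0$ rests on the claim that every heavy line meets $L_0$ in a point; this is false in $\mathbb{P}^n$ for $n \ge 3$, where two lines are generically disjoint (the paper explicitly remarks after the proof that $L_1$ and $L_2$ need not intersect). What your count is implicitly trying to establish is exactly Coman's theorem --- that a unit-mass positive closed bidimension-$(1,1)$ current cannot have Lelong numbers greater than $1/2$ at points not covered by a line plus one extra point --- and that theorem is proved in \cite{C06} by pluripotential-theoretic methods (entire pluricomplex Green functions), not by a Siu-decomposition count. The correct move is to quote it as a black box rather than to reprove it.
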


We close by looking at a weak generalization of \cite[Theorem 1.1]{H17} from $\mathbb{P}^2$ to $\mathbb{P}^n$, and making some remarks on the difficulties of attempting to make a stronger result.

\section{Preliminaries}

In this section we will lay out the tools that will be commonly used in the upcoming proofs for the convenience of the reader.  To show Theorem 1.1, we will rely upon the following:

\begin{Theorem}\cite[Theorem 1.2]{CT15}\label{T:mt1} If $T$ is a positive closed current of bidimension $(p,p)$ on $\mathbb{P}^n$, $0<p<n$, with $\|T\| = 1$, then the set $E_{p/(p+1)}^+(T,\mathbb{P}^n)$ is either contained in a $p$-dimensional linear subspace of $\mathbb{P}^n$ or else it is a finite set and $|E_{p/(p+1)}^+(T,\mathbb{P}^n)\backslash L| = p$ for some line $L$. 
\end{Theorem}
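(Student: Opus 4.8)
The plan is to combine Siu's analyticity and decomposition theorems with a sharp bound on sums of Lelong numbers at points in general position, and then to run a purely combinatorial argument on the resulting configuration. Throughout write $\alpha = p/(p+1)$ and record the basic estimate $\nu(T,q)\le\|T\|=1$ for every $q$: slicing $T$ against a generic linear subspace $H$ of dimension $n-p$ through $q$ produces a positive measure $T\wedge[H]$ of total mass $\int T\wedge\omega_n^p=1$, whose atom at $q$ is at least $\nu(T,q)\,\nu([H],q)=\nu(T,q)$ by Demailly's inequality. More generally, for any irreducible subvariety $Z$ the same slicing (by a generic subspace containing $Z$, chosen so that $Z$ becomes maximal-dimensional) together with Siu's decomposition yields $\|T\|\ge \nu(T,Z)\deg Z$.

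First I would dispose of the case where $E_\alpha^+(T)$ has a component of positive dimension. By Siu's theorem $E_\alpha(T)$ is analytic of dimension at most $p$, and $E_\alpha^+(T)=\bigcup_k E_{\alpha+1/k}(T)$; let $Z$ be a positive-dimensional component with generic Lelong number $b:=\nu(T,Z)>\alpha$. The estimate above gives $\deg Z<(p+1)/p\le 2$, so $\deg Z=1$ and $Z=\Lambda$ is linear; moreover two such components would force $\|T\|>2\alpha\ge 1$, so there is at most one, say of dimension $k\le p$. To exclude any putative point $q_0\in E_\alpha^+(T)\setminus\Lambda$, slice $T$ by a generic linear subspace $M$ containing $\Lambda\cup\{q_0\}$ to reduce to the maximal-dimensional situation: the Siu decomposition reads $T\wedge[M]=b\,[\Lambda]+R'$ with $\|R'\|=1-b<1/(p+1)\le\alpha$, whence $\nu(T,q_0)\le\nu(T\wedge[M],q_0)=\nu(R',q_0)\le\|R'\|<\alpha$, a contradiction. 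Thus $E_\alpha^+(T)\subseteq\Lambda$ is contained in a $p$-dimensional linear subspace, which is the first alternative.

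It remains to treat the case where $E_\alpha^+(T)$ is a finite set $S$. Here the engine is a sharp inequality controlling $\sum_{q\in F}\nu(T,q)$ for finite $F\subseteq S$ whose points are in sufficiently general position (no large low-dimensional coincidences); I would establish it by applying Demailly's comparison theorem to $T$ against a plurisubharmonic weight on $\mathbb{P}^n$ with prescribed logarithmic poles at the points of $F$, or equivalently by iterated slicing of $T$ against linear subspaces through those points. Calibrated against $\alpha=p/(p+1)$, the inequality should read $\sum_{q\in F}\nu(T,q)\le c$ with $c$ equal to $|F|\,\alpha$ in the extremal configuration, so that the strict inequalities $\nu(T,q)>\alpha$ on $S$ force a contradiction; for $p=1$ the relevant instance is that four points no three of which are collinear satisfy $\sum\nu(T,q)\le 2=2\|T\|$. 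The upshot is a forbidden-configuration statement: $S$ cannot contain a general-position subset beyond a fixed size.

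Finally I would convert this local constraint into the global dichotomy by a Sylvester--Gallai-type argument. For $p=1$ the forbidden-configuration statement says every four points of $S$ contain three collinear ones, and an elementary argument settles it: if $L$ meets $S$ in $s_1,s_2$ and $a,b\in S\setminus L$ are distinct, then $\{s_1,s_2,a,b\}$ has no collinear triple (else $\langle a,b\rangle$ would contain both $s_1,s_2$, forcing $\langle a,b\rangle=L$), so at most one point lies off $L$; since a line alone already spans only a line, being outside the first alternative gives exactly $|S\setminus L|=1=p$. For general $p$ the same scheme applies with ``$p$-plane'' and ``a line omitting exactly $p$ points'' in place of ``line'' and ``one point'', the exact count $p$ arising because a line together with fewer than $p$ points still spans a $p$-dimensional subspace and hence falls under the first alternative. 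The main obstacle is the configuration inequality of the previous paragraph: pinning down the precise general-position hypothesis and the sharp constant—hence the exact count $p$—is the technical heart, and it is exactly here that the construction of adapted weights, or the bookkeeping in the iterated slicing, must be carried out carefully; the measure-theoretic point that $E_\alpha^+(T)$ is only an increasing union of analytic sets, handled through the approximation $E_\alpha^+(T)=\bigcup_k E_{\alpha+1/k}(T)$, should be dealt with in tandem.
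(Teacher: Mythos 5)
First, a point of record: the paper does not prove this statement at all --- it is quoted verbatim as \cite[Theorem 1.2]{CT15} (Coman--Truong) and used as a black box throughout, so there is no in-paper proof to compare against; your attempt has to stand on its own, and it does not. The decisive gap is that the ``engine'' of your argument --- the sharp configuration inequality $\sum_{q\in F}\nu(T,q)\le c$ for points in general position, with a constant calibrated so that $\nu>p/(p+1)$ at each point yields a contradiction --- is postulated rather than proved. You say you would obtain it from Demailly's comparison theorem with adapted weights or by iterated slicing, and you yourself flag it as ``the technical heart,'' but no candidate weight is constructed, no precise general-position hypothesis is formulated, and the phrase ``$c$ equal to $|F|\,\alpha$ in the extremal configuration'' is circular. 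For $p=1$ an inequality of the kind you cite does follow from Coman's Green-function techniques, but for general $p$ nothing of this clean form is available off the shelf, and the actual proof in \cite{CT15} does not proceed this way: it runs a geometric induction using positive closed $(1,1)$-currents and plurisubharmonic functions with poles along linear subspaces, with the exact count $p$ emerging from that analysis, not from a single sum bound plus Sylvester--Gallai combinatorics.

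There are three further genuine defects. (1) Your case split ``positive-dimensional component or finite set'' is not exhaustive: $E^+_{p/(p+1)}(T)=\bigcup_k E_{p/(p+1)+1/k}(T)$ is an increasing union of analytic sets, so it can a priori be infinite and zero-dimensional (each $E_{p/(p+1)+1/k}$ finite with sizes growing); finiteness in the second alternative is part of the conclusion of the theorem and cannot be assumed --- indeed this paper's third example in Section 3 exhibits infinite upper level sets just below the threshold. (2) Your slicing steps are unjustified at exactly the delicate points: for a general positive closed current $T$ of bidimension $(p,p)$, the product $T\wedge[M]$ with a linear subspace $M$ of higher codimension forced to pass through $\Lambda\cup\{q_0\}$ (so \emph{not} generic) is not known to be well defined, nor is mass preservation automatic; note that in Section 4 of this paper, even wedging a $(1,1)$-current with a single line required Meo's theorem, Demailly regularization (Proposition 2.5), and \cite{FS95}. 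The same caveat applies to your bound $\|T\|\ge\nu(T,Z)\deg Z$ for $Z$ of dimension $k<p$, since Siu's decomposition applies directly only to $p$-dimensional components. (3) The combinatorial step for $p=1$ has a slip: a collinear triple in $\{s_1,s_2,a,b\}$ requires only \emph{one} of $s_1,s_2$ to lie on $\langle a,b\rangle$, not both, so your argument needs at least three points of $S$ on $L$ (so that two of them avoid $\langle a,b\rangle\cap L$) or a separate small-configuration case; and the asserted extension of this combinatorics to general $p$ (``the same scheme applies'') is pure hand-waving, as the passage from a forbidden configuration to ``exactly $p$ points off a single line'' is precisely where the induction in \cite{CT15} does real work.
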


We will also make specific use of the previous theorem when $p=1$, which was shown by Coman:

\begin{Theorem}\cite[Theorem 1.1]{C06} \label{T:21} Let $T$ be a positive closed current of bidimension $(1,1)$ with $\|T\|=1$ on $\mathbb{P}^n$.  If $\alpha \geq \frac{1}{2}$ then there exists a line $L$ such that either $E_{\alpha}^+(T) \subset L$, or $E_{\alpha}^+(T)$ is a finite set and $|E_{\alpha}^+ (T) \backslash L| \leq 1$.  Moreover, if $\alpha \geq 2/3$ then $E_{\alpha}^+ (T) \subset L$.
 \end{Theorem}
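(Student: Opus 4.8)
The plan is to obtain the first assertion directly from the structure theorem already available to us and then to upgrade it to the $\alpha \ge 2/3$ conclusion by a short intersection-theoretic argument. For the first part, the key observation is that whenever $\alpha \ge \frac12$ one has the inclusion $E_\alpha^+(T) \subseteq E_{1/2}^+(T)$, since $\nu(T,q) > \alpha \ge \frac12$ forces $\nu(T,q) > \frac12$. Applying \cite[Theorem 1.2]{CT15} (stated in Section 2) with $p = 1$, for which $p/(p+1) = \frac12$, the set $E_{1/2}^+(T)$ is either contained in a line or is finite with $|E_{1/2}^+(T) \setminus L| = 1$ for some line $L$. In the first case $E_\alpha^+(T) \subseteq E_{1/2}^+(T) \subset L$, and in the second case $E_\alpha^+(T)$ is a finite subset of $E_{1/2}^+(T)$ with $|E_\alpha^+(T) \setminus L| \le |E_{1/2}^+(T) \setminus L| = 1$. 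Either way the first assertion follows at once.

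For the refinement I would use two facts. First, the Siu decomposition \cite{Siu74}: for any finite family of distinct lines $\ell_1, \dots, \ell_k$ one has $T \ge \sum_j \nu(T, \ell_j)[\ell_j]$, where $\nu(T, \ell_j)$ is the generic Lelong number of $T$ along $\ell_j$, and comparing masses gives $\sum_j \nu(T, \ell_j) \le \|T\| = 1$; the same reasoning applied to a one-dimensional component $C$ of $E_\alpha^+(T)$ yields $\alpha \deg C \le 1$. Second, the pairwise estimate
$$\nu(T, x) + \nu(T, y) \le 1 + \nu(T, \ell_{xy})$$
for distinct points $x, y$ with connecting line $\ell_{xy}$, which I would derive by subtracting the Siu part $\nu(T, \ell_{xy})[\ell_{xy}]$ and slicing the residual current by a generic hyperplane $H$ containing $\ell_{xy}$, using Demailly's comparison theorem $\nu(T \wedge [H], x) \ge \nu(T, x)$ together with the fact that $T \wedge [H]$ is then a positive measure of total mass $1$. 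Granting these, suppose $\alpha \ge \frac23$ and that $E_\alpha^+(T)$ contained three non-collinear points $x_1, x_2, x_3$. The three lines $\ell_{x_i x_j}$ are distinct, and by the pairwise estimate each satisfies $\nu(T, \ell_{x_i x_j}) > 2\alpha - 1 \ge \frac13$, so their generic Lelong numbers sum to strictly more than $1$, contradicting the mass bound. Hence the isolated points of $E_\alpha^+(T)$ are collinear; the degree bound forces every one-dimensional component to be a line, since $\deg C \ge 2$ would give $\alpha \le \frac12$, the mass bound permits at most one such line, and a further application of the three-point argument (two points on the line together with any point off it would be non-collinear) shows that the line and all the points must lie on a single line $L$, giving $E_\alpha^+(T) \subset L$.

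The robust input is thus the structure theorem of Coman--Truong, which disposes of the case $\alpha \ge \frac12$ immediately. The step I expect to be most delicate is the rigorous justification of the pairwise estimate: one must verify that after removing the Siu mass along $\ell_{xy}$ the residual current charges no curve that would render the slicing undefined, so that a generic hyperplane $H \supset \ell_{xy}$ produces a well-defined intersection $T \wedge [H]$ to which Demailly's comparison theorem applies with the sharp constant. The remaining difficulty is organizational rather than analytic: in the $\alpha \ge \frac23$ case one must combine the finitely many isolated points, the connecting-line data, and the at most one one-dimensional line component into a single line $L$, which the three-point collinearity argument accomplishes once the estimates above are in place.
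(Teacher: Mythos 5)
First, a point of comparison: the paper never proves this statement. It is Theorem 2.2 of the paper, quoted verbatim from \cite[Theorem 1.1]{C06} as a preliminary tool, so there is no internal proof to measure your attempt against; the evaluation has to be on the merits of your argument and on the toolkit the paper provides. On that basis, your first paragraph is correct: since $E_{\alpha}^+(T)\subseteq E_{1/2}^+(T)$ when $\alpha\geq 1/2$, the case $p=1$ of \cite[Theorem 1.2]{CT15} (Theorem 2.1 in the paper) immediately yields a line $L$ with either $E_{\alpha}^+(T)\subset L$ or $E_{\alpha}^+(T)$ finite and $|E_{\alpha}^+(T)\backslash L|\leq 1$. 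Inside this paper, where both results are cited as black boxes, this is logically legitimate, though it inverts history: \cite{CT15} is the later generalization of \cite{C06}, so as a reconstruction of Coman's theorem it would be circular if \cite{CT15} relied on it.

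The genuine gap is in your derivation of the pairwise estimate $\nu(T,x)+\nu(T,y)\leq 1+\nu(T,\ell_{xy})$, which is the engine of your $\alpha\geq 2/3$ argument. You slice the residual current $R'=T-\nu(T,\ell_{xy})[\ell_{xy}]$ by a \emph{generic hyperplane} $H\supset\ell_{xy}$. But in $\mathbb{P}^2$ (which the statement includes) the only hyperplane containing $\ell_{xy}$ is $\ell_{xy}$ itself, so there is no generic choice available and the well-definedness of $R'\wedge[\ell_{xy}]$ is precisely the hard analytic point; moreover, ``the residual charges no curve inside $H$'' is not the right sufficient condition for the wedge to exist --- what one needs is local integrability of a potential of $[H]$ against the trace measure of $R'$. (For $n\geq 3$ a Fubini-type argument over the $\mathbb{P}^{n-2}$-family of hyperplanes through $\ell_{xy}$ does rescue your genericity claim, but that argument is exactly what would have to be written out.) The repair is the chain of tools the paper itself deploys in Section 4 when proving its Theorem 1.2: replace the bidimension $(1,1)$ residual by a bidegree $(1,1)$ current with the same mass and Lelong numbers via \cite{Me98}, regularize it using Proposition 2.5 so that it has analytic singularities and is smooth at the generic point of $\ell_{xy}$ (possible because the residual has zero generic Lelong number there), invoke \cite{FS95} for the well-definedness of the wedge with $[\ell_{xy}]$, and \cite[Corollary 5.10]{D93} for the Lelong number comparison, then let the approximation parameter tend to zero. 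With that lemma in place, the rest of your argument is correct, and in fact simpler than you present it: once every three points of $E_{\alpha}^+(T)$ are forced to be collinear (three distinct connecting lines would have generic Lelong numbers summing to more than $1$, contradicting Siu's decomposition \cite{Siu74} and the mass bound), the whole set lies on a single line, with no separate discussion of one-dimensional components needed.
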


We will also be working on generalizing the following result to $\mathbb{P}^n$:

\begin{Theorem}\cite[Theorem 1.1]{H17}\label{T:mt1} Let $T$ be a positive closed current of bidimension $(1,1)$ on $\mathbb{P}^2$ with $\|T\|=1$, $\alpha > 2/5$ and $\beta = \frac{2}{3} (1- \alpha)$.  Let $\{ q_{i} \}_{i=1}^{4}$ be points in $\mathbb{P} ^{2}$ such that $\nu (T, q_{i}) \geq \alpha$.  Then there exists a conic $C$ (possibly reducible) such that $| E_{\beta }^+ (T) \backslash C | \leq 1$.
\end{Theorem}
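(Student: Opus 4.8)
The plan is to combine Siu's decomposition with Bezout-type mass estimates and Coman's line theorem (Theorem 2.2), organizing everything around the arithmetic identity $2\alpha+3\beta=2$ that the hypotheses encode. First I record its two consequences that drive every estimate: since $\beta<\alpha$ (which is equivalent to $\alpha>2/5$) we have $2\alpha+\beta=2-2\beta>1$ and $4\alpha+\beta=2+2(\alpha-\beta)>2$. I also record the Demailly--Bezout intersection inequality in the form used throughout: if $S$ is a positive closed $(1,1)$-current on $\mathbb{P}^2$ and $C$ an irreducible curve of degree $d$ with $\nu(S,C)=0$, then $\sum_{p\in C}\mathrm{mult}_p(C)\,\nu(S,p)\le d\,\|S\|$; the cases $d=1$ (the ``line lemma'') and $d=2$ (the ``conic lemma'') are the workhorses, and after subtracting a generic multiple $\nu(T,C)[C]$ they read $\sum_{i=1}^m\nu(T,p_i)\le 1+(m-1)\nu(T,L)$ for points on a line $L$ and $\sum_{i=1}^m\nu(T,p_i)\le 2+(m-4)\nu(T,C)$ for points on a conic $C$. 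Writing the Siu decomposition $T=\sum_j\lambda_j[Y_j]+R$ with $\lambda_j=\nu(T,Y_j)$ and $R$ carrying no curve mass, I note that $\beta<\alpha$ forces $q_1,\dots,q_4\in E_\beta^+$, and that the one-dimensional part of $E_\beta^+$ is exactly $Z:=\bigcup\{Y_j:\lambda_j>\beta\}$, with $\beta\sum\deg Y_j<\sum\lambda_j\deg Y_j\le\|T\|=1$.

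Next I reduce the geometry of the four marked points. Applying Theorem 2.2 at a threshold $\alpha'\in[1/2,\alpha)$ shows that if $\alpha>1/2$ then at least three of the $q_i$ are collinear, and if $\alpha\ge2/3$ all four are; in either collinear configuration the common line $L$ is charged, since the line lemma applied to three marked points gives $3\alpha\le 1+2\nu(T,L)$ while $3\alpha>1$. These collinear cases are then handled by taking $L$ as one component of the conic and producing a single further line to absorb the remaining points of $E_\beta^+$. The residual \emph{main case} is therefore $\alpha\in(2/5,1/2]$ with no three of the $q_i$ collinear (general position); here $\beta\ge1/3$, so $\sum\deg Y_j<1/\beta\le3$, forcing $\deg Z\le2$ and hence $Z\subseteq C_Z$ for a (possibly reducible) conic $C_Z$.

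The heart of the argument works inside the pencil of conics through $q_1,q_2,q_3,q_4$, whose base locus is exactly these four points. For $x\in E_\beta^+\setminus\{q_i\}$ let $C_x$ be the unique pencil member through $x$. If $C_x$ is reducible, $x$ lies on a line $L_{ab}$ through two marked points, and the three points $q_a,q_b,x$ give $2\alpha+\beta<1+2\nu(T,L_{ab})$; since $2\alpha+\beta>1$ this forces $\nu(T,L_{ab})>0$, a charged line, so $x\in Z$. If $C_x$ is irreducible, the conic lemma at $q_1,\dots,q_4,x$ yields $4\alpha+\nu(T,x)\le 2+\nu(T,C_x)$, and $4\alpha+\beta>2$ forces $\nu(T,C_x)>0$; when $\nu(T,C_x)>\beta$ this again places $x\in Z$. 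Finally I show that $C_Z$ (suitably enlarged when $\deg Z\le 1$) misses at most one point of $E_\beta^+$: if two marked points lay off $C_Z$ then the line through them would be uncharged, and the line lemma for $R$ would give $2\alpha\le\|R\|<1-2\beta$, i.e. $\alpha<-1/2$; and the degenerate possibility that all four $q_i$ lie off a single charged line is excluded by the conic lemma for $R$, which gives $4\alpha\le 2\|R\|\le 2(1-\beta)$, i.e. $\alpha\le1/4$. Both contradict $\alpha>2/5$.

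The main obstacle is the genuinely delicate sub-regime $\alpha\in(2/5,1/2)$, where $\nu(T,C_x)$ may lie strictly between $\delta:=4\alpha+\beta-2$ and $\beta$, so that the conic $C_x$ carrying an extra point $x$ is charged but is \emph{not} a component of the curve part $Z$. Controlling these ``mildly charged'' conics is where the estimate must be pushed to the edge permitted by $2\alpha+3\beta=2$: applying the conic lemma to $S=T-\nu(T,C_x)[C_x]$ shows that a conic carrying two or more points of $E_\beta^+$ beyond the $q_i$ must satisfy $\nu(T,C_x)\ge 1-2\beta$, of which the mass constraint $\|T\|=1$ permits essentially only one, while the finitely many conics carrying a single extra point must be shown to collapse---together with $Z$---into one conic up to a single exceptional point. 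Making this bookkeeping simultaneously sharp as $\alpha\to(2/5)^+$, precisely where all the inequalities above lose their slack, is the crux; everything else is organizing the casework around the two master inequalities $2\alpha+\beta>1$ and $4\alpha+\beta>2$.
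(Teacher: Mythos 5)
There is a genuine gap, and it sits exactly where you yourself flag ``the crux.'' Your outline correctly assembles the standard reductions (Siu decomposition, the Demailly--Bezout line and conic lemmas, the identity $2\alpha+3\beta=2$), but the final collapsing step is asserted, not proved, and it cannot be proved by this kind of mass bookkeeping. Quantitatively: for $x\in E_\beta^+$ on an irreducible pencil conic $C_x$ you obtain only $\nu(T,C_x)>\delta:=4\alpha+\beta-2$, and $\delta\to 0$ as $\alpha\to (2/5)^+$; the constraint $\|T\|=1$ then permits on the order of $1/(2\delta)$ pairwise distinct mildly charged conics, each carrying its own extra point of $E_\beta^+$, and nothing in Siu plus Bezout forces these to coalesce into one conic up to a single exceptional point. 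Two intermediate steps are also incorrect as written. First, ``$\nu(T,L_{ab})>0$, a charged line, so $x\in Z$'' is a non sequitur: $Z$ was defined by \emph{generic} Lelong number exceeding $\beta$, while the line lemma only yields $\nu(T,L_{ab})\geq(2\alpha+\beta-1)/2$, which is $\leq\beta$ unless $\alpha>5/8$; so in the main regime $\alpha\in(2/5,1/2]$ these points need not lie in $Z$ at all. Second, in the configuration with exactly three collinear marked points, Siu gives only $a\geq(3\alpha-1)/2$ along their line, so for $x$ off that line the renormalized current satisfies $\nu(R',x)>\frac{2}{3}\cdot\frac{1-\alpha}{1-a}\geq\frac{4}{9}<\frac{1}{2}$, and Theorem 2.2 does not apply; your ``single further line'' does not materialize. (Contrast this with the four-collinear-point case, where $a\geq(4\alpha-1)/3$ pushes the renormalized numbers past $1/2$ --- that is precisely Theorem 1.2 of this paper, and it is the only collinear configuration where the line theorem closes the argument.)

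The proof this paper points to is genuinely different at the decisive step: as noted immediately after the statement, the argument in \cite{H17} runs through Coman's conic theorem \cite[Theorem 1.2]{C06} on $\mathbb{P}^2$ (threshold $2/5$), whose proof uses entire pluricomplex Green functions and which supplies exactly the global collapsing statement your local estimates cannot reach. The mechanism is the auxiliary-current device visible in Section 4 of this paper: one forms a unit-mass current such as $R=\frac{5\alpha'-2}{5\alpha'}[L]+\frac{2}{5\alpha'}T$ with $2/5<\alpha'<\alpha$, checks that all relevant points of $E_\beta^+(T)$ acquire Lelong number $>2/5$ for $R$, invokes the conic theorem for $R$, and then uses Bezout to identify the resulting conics. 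Without that input (or an equivalent replacement), your approach degenerates precisely as $\alpha\to(2/5)^+$, where all of your master inequalities lose their slack; so the proposal should be regarded as a reduction of the theorem to an unproven collapsing claim, not a proof.
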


The proof of the above theorem utilized \cite[Theorem 1.2]{C06}, but for bidimension $(1,1)$ currents on $\mathbb{P}^n$, the only analogous theorem to assist us is the following:

\begin{Theorem}\cite[Theorem 3.3]{CT15}\label{T:mt1} Let $T$ be a positive closed current of bidimension $(1,1)$ on $\mathbb{P}^n$ with $\|T\|=1$.  If $|E_{2/5}^+(T)| > 37$ then there exists a curve $C\subset \mathbb{P}^n$ of degree at most $2$ such that $|E_{2/5}^+(T)\backslash C | \leq 1$.
\end{Theorem}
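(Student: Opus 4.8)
The plan is to extract everything from the Demailly comparison inequality applied to \emph{hypersurfaces} rather than to curves: for a bidimension $(1,1)$ current on $\mathbb{P}^n$ with $n>2$ the self-intersection $T\wedge T$, and more generally the wedge of $T$ with a curve class, vanishes for bidegree reasons, so the only admissible intersections are with divisors. Writing $\alpha=2/5$ and $S:=E_{\alpha}^+(T)$, the key arithmetic estimate is the following. If $H$ is a hypersurface of degree $d$ that contains none of the (at most countably many) curves along which $T$ has positive generic Lelong number, then $T\wedge[H]$ is a well-defined positive measure of total mass $\int T\wedge[H]=d\,\|T\|=d$, and the comparison theorem gives $\nu(T\wedge[H],q)\ge \mathrm{mult}_q(H)\,\nu(T,q)$, so that
\[
\sum_{q}\mathrm{mult}_q(H)\,\nu(T,q)\le d .
\]
Since every point of $S$ carries Lelong number strictly larger than $2/5$, this forces such a hyperplane $(d=1)$ to meet $S$ in at most $2$ points and such a quadric $(d=2)$ in at most $4$ points.

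Next I would invoke the Siu decomposition $T=\sum_j\lambda_j[C_j]+R$, with $C_j$ the irreducible curves carrying positive generic Lelong number $\lambda_j=\nu(T,C_j)$, the residual $R$ charging no curve, and the mass bound $\sum_j\lambda_j\deg C_j\le\|T\|=1$. Any curve $C_j$ with $\lambda_j>2/5$ lies entirely in $S$; the mass bound forces $\deg C_j\le 2$ for each such curve, and since each contributes more than $\tfrac25\deg C_j$ to a total of at most $1$, the combined degree of all these ``heavy'' curves is at most $2$. Their union is thus contained in a single curve of degree $\le 2$, the natural candidate for $C$. Every other point of $S$ either lies on a charged curve $C_j$ with small $\lambda_j$ or on no charged curve at all; in the former case I would subtract $\lambda_j[C_j]$ and apply the displayed inequality to the current of mass $1-\lambda_j\deg C_j$, using the mass budget to bound how many such curves can carry extra points of $S$.

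The heart of the argument, and the source of the constant $37$, is the control of the ``free'' points of $S$ lying on no charged curve. Through any five such points one can pass a quadric hypersurface $H$ (the linear system of quadrics through five points is nonempty for all $n\ge 2$); if $H$ could be chosen to avoid the charged curves of $T$, the displayed inequality would give $\sum_{i=1}^{5}\nu(T,q_i)\le 2$, contradicting $5\cdot\tfrac25=2<\sum_{i=1}^{5}\nu(T,q_i)$. Hence every quadric through any five free points must contain a charged curve, a strong constraint that, via a dimension count on the linear systems of quadrics combined with the bound on the number and degree of charged curves from the mass budget, limits the free points to a bounded number and pins them to low-degree loci. Tallying the worst case over all ingredients -- heavy curves of total degree $\le 2$, charged curves of small multiplicity, and free points -- yields the explicit threshold: once $|S|>37$, the constraints can be satisfied simultaneously only if all but at most one point of $S$ lies on a single curve of degree $\le 2$.

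The main obstacle is precisely this last combinatorial-geometric step. Unlike the $\mathbb{P}^2$ case, where the self-intersection bound $\sum_q\nu(T,q)^2\le\|T\|^2$ immediately caps $|S|$ by $\lfloor 25/4\rfloor=6$, on $\mathbb{P}^n$ no such inequality is available, so every estimate must be wrung out of intersections with explicit hyperplanes and quadrics. The delicate points are (i) ensuring at each stage that the chosen hypersurface avoids the charged curves, so that $T\wedge[H]$ stays admissible and the comparison inequality applies, and (ii) organizing the dimension counts so that the many possible configurations of free points and charged curves all collapse onto one curve of degree $\le 2$ with at most one exceptional point. Pinning down the sharp value $37$ is exactly the book-keeping in this final optimization.
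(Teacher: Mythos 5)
You should first be aware that the paper contains no proof of this statement to compare against: it is Theorem 2.4 of the preliminaries, quoted verbatim from Coman--Truong \cite[Theorem 3.3]{CT15}, where the constant $37$ and the single exceptional point emerge from a delicate combinatorial analysis of how the points of $E_{2/5}^+(T)$ can distribute over lines and conics, played off against auxiliary currents and Coman's results from \cite{C06}. Measured against that, your proposal is a program rather than a proof. Everything you actually carry out --- the Siu decomposition $T=\sum_j \lambda_j [C_j]+R$, Demailly's comparison inequality against hyperplanes and quadrics, the mass bound $\sum_j \lambda_j \deg C_j \le \|T\|=1$, hence total degree at most $2$ for the curves with $\lambda_j>2/5$ --- is standard bookkeeping; the step that constitutes the theorem, namely that $|E_{2/5}^+(T)|>37$ forces all but at most one point onto a single curve of degree at most $2$, is deferred to an unspecified ``dimension count'' and ``tallying the worst case.'' You acknowledge this yourself, but it is not a technicality to be filled in later: it is the entire content of the result, and nothing in the preceding estimates implies it.

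Two things break concretely. First, your plan to use ``the bound on the number and degree of charged curves from the mass budget'' fails because $\sum_j \lambda_j \deg C_j\le 1$ bounds only a weighted sum: there may be countably many charged curves with arbitrarily small $\lambda_j$, and a point $q\in C_j$ lies in $E_{2/5}^+(T)$ as soon as the residual current supplies Lelong number greater than $2/5-\lambda_j$ there. A handful of lightly charged lines, each carrying several such points, is not excluded by anything you prove; ruling out exactly these configurations --- by playing the normalized residual against results such as \cite[Theorem 1.1]{C06} --- is where the actual work, and the number $37$, come from, and your sketch does not touch it. Second, the admissibility of $T\wedge[H]$ is governed by the trace measure of $T$ putting no mass on $H$ (cf.\ \cite{FS95}), not by $H$ avoiding the charged \emph{curves}: for $n\ge 3$ a bidimension $(1,1)$ current can charge a $2$-dimensional subvariety lying on a quadric while charging no curve at all, so your key inequality can fail for every quadric through five chosen points even when those points are ``free'' in your sense. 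This particular defect is repairable by a countability/genericity argument inside the linear system, but once repaired it shows your intermediate claim (``every quadric through five free points must contain a charged curve'') is not the right statement either: one then gets directly that at most four free points exist, which still leaves the first, and central, problem untouched. As the result is external to the paper, the honest options are to cite it, as the paper does, or to reproduce Coman--Truong's combinatorial scheme in full.
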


We will also make use of the next proposition which follows easily from Demailly's regularization theorem \cite[Proposition 3.7]{D92}.

\begin{Proposition} \label{P:25}  Let $R$ be a positive closed current of bidegree $(1,1)$ on $\mathbb{P}^n$, $\nu (R, x_i) > a_i$, $i = 1, \dots , N$ for $x_i \in \mathbb{P}^n$ and $a_i > 0$.  Then there exists a positive closed bidegree $(1,1)$ current $R'$ on $\mathbb{P}^n$ with analytic singularities such that $\|R'\| = \|R\|$, $\nu(R',x_i) > a_i$ for $i = 1, \dots , N$, and $\nu(R' , x) \leq \nu(R,x)$ for all $x\in \mathbb{P}^n$.  In particular, $R'$ is smooth in a neighborhood of every point where $R$ has $0$ Lelong number.
\end{Proposition}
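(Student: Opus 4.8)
The plan is to realize $R$ through a global quasi-plurisubharmonic potential and then feed that potential into Demailly's analytic approximation. Since $H^{1,1}(\mathbb{P}^n,\mathbb{R})$ is spanned by the class of the Fubini--Study form $\omega_n$ (normalized so that $\int_{\mathbb{P}^n}\omega_n^n=1$), any positive closed $(1,1)$-current $R$ is cohomologous to $\|R\|\,\omega_n$, so by the global $dd^c$-lemma we may write $R=\|R\|\,\omega_n+dd^c\varphi$ for a quasi-psh potential $\varphi$ satisfying $dd^c\varphi\geq-\|R\|\,\omega_n$. Lelong numbers are insensitive to the smooth part, so $\nu(R,x)=\nu(\varphi,x)$ at every $x$, and in particular $\nu(\varphi,x_i)=\nu(R,x_i)>a_i$.

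Next I would apply Demailly's regularization theorem to $\varphi$ on the compact K\"ahler manifold $\mathbb{P}^n$. This produces quasi-psh functions $\varphi_k$ with analytic singularities, of the form $\varphi_k=\tfrac{1}{2m_k}\log\sum_j|\sigma_{k,j}|^2+O(1)$ for holomorphic sections $\sigma_{k,j}$, together with the two estimates that drive everything: the almost-positivity $\|R\|\,\omega_n+dd^c\varphi_k\geq-\varepsilon_k\,\omega_n$ with $\varepsilon_k\downarrow0$, and the uniform two-sided Lelong control
\[
\nu(\varphi,x)-\tfrac{n}{k}\;\leq\;\nu(\varphi_k,x)\;\leq\;\nu(\varphi,x)\qquad\text{for all }x\in\mathbb{P}^n .
\]
The right-hand inequality already gives the global domination $\nu(\,\cdot\,)\leq\nu(R,\cdot)$ that we want, while the left-hand inequality, combined with the strict inequalities $\nu(R,x_i)>a_i$ at the finitely many points $x_1,\dots,x_N$, will let us recover $\nu>a_i$ after choosing $k$ large.

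The one technical point is that the approximating currents $R_k:=\|R\|\,\omega_n+dd^c\varphi_k$ are only almost positive (they satisfy $R_k\geq-\varepsilon_k\,\omega_n$), and we also need to restore the exact mass $\|R'\|=\|R\|$. Both are fixed by a single rescaling: put $t_k=\|R\|/(\|R\|+\varepsilon_k)\in(0,1]$ and set
\[
R':=\|R\|\,\omega_n+dd^c(t_k\varphi_k)=t_k\bigl(R_k+\varepsilon_k\,\omega_n\bigr).
\]
Since $R_k+\varepsilon_k\,\omega_n=(\|R\|+\varepsilon_k)\,\omega_n+dd^c\varphi_k\geq0$ and $t_k>0$, the current $R'$ is genuinely positive and closed, it still has analytic singularities, and it lies in the class $\|R\|\{\omega_n\}=\{R\}$, whence $\|R'\|=\|R\|$. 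Its Lelong numbers are $\nu(R',x)=t_k\,\nu(\varphi_k,x)\leq\nu(\varphi_k,x)\leq\nu(R,x)$ for every $x$, giving the global bound, and at each $x_i$ we have $\nu(R',x_i)=t_k\,\nu(\varphi_k,x_i)\geq t_k\bigl(\nu(R,x_i)-\tfrac{n}{k}\bigr)$, which tends to $\nu(R,x_i)>a_i$ as $k\to\infty$; choosing $k$ large enough that this exceeds $a_i$ simultaneously for the finitely many indices $i=1,\dots,N$ yields $\nu(R',x_i)>a_i$.

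Finally, for the ``in particular'' statement, suppose $\nu(R,x)=0$. Then $0\leq\nu(R',x)\leq\nu(R,x)=0$, so $\nu(R',x)=0$. Writing $R'$ locally near $x$ as $dd^c\bigl(c\log\sum_j|h_j|^2\bigr)+(\text{smooth})$ with $c>0$ and holomorphic $h_j$, the vanishing of the Lelong number forces $\min_j\operatorname{ord}_x h_j=0$, i.e.\ some $h_j(x)\neq0$; hence $\sum_j|h_j|^2>0$ near $x$, the local potential is smooth there, and $R'$ is smooth in a neighborhood of $x$. I expect the only step requiring genuine care to be the reconciliation of Demailly's unavoidable $\varepsilon_k$-loss of positivity with the exact normalizations $R'\geq0$ and $\|R'\|=\|R\|$; the rescaling above is what makes this harmless, precisely because $t_k\leq1$ keeps the one-sided bound $\nu(R',\cdot)\leq\nu(R,\cdot)$ intact.
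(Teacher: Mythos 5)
Your proof is correct, and it follows exactly the route the paper intends: the paper offers no written proof beyond citing Demailly's regularization theorem \cite[Proposition 3.7]{D92}, and your argument is precisely that application, carried out in full (global $dd^c$-potential, Demailly approximation with the two-sided Lelong estimates, and the rescaling by $t_k=\|R\|/(\|R\|+\varepsilon_k)$ to absorb the $\varepsilon_k$-loss of positivity while preserving the mass and the one-sided bound $\nu(R',\cdot)\leq\nu(R,\cdot)$).
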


\section{Proof of Theorem 1.1}  

To start with, let us recall some basic definitions.  Consider $A = \{x_1, \dots, x_{p+1}\}$, $x_i \in \mathbb{P}^n$.  By the Span $(A)$, we mean the smallest linear subspace $V\subset \mathbb{P}^n$ that contains $A$.  If $p\leq n$ and span$(A)$ is a $p$-dimensional space, then we say $\{x_i\}_{i=1}^{p+1}$ are linearly independent.  If we have $p>n+1$ points, then we say they are in general position if any $n+1$ of them are linearly independent.  Assume $\|T\| = 1$ and we now prove Theorem 1.1.

\begin{proof}

Suppose $\{x_i\}_{i=1}^{p}$ are points in $E_{\beta}^+(T)\backslash \{q_1, q_2\}$, and let $A := \{q_1, q_2, x_1, \dots ,x_{p-1}\}$ and then $V_1 = \text{span}(A)$. Suppose that $\{q_1, q_2\}\cup \{x_i\}_{i=1}^{p-1}$ are linearly independent and then $V_1 := \text{span}(A)$ is a $p$ dimensional linear subspace and $x_p \in E_{\beta}^+(T)\backslash V_1$, noting that if no such points exists, then there is nothing to prove.  Let $L_1$ be the line spanned by $q_1,q_2$, and since the points in $A$ are linearly independent, $L_1$ does not contain any other points of $A$ or $x_p$. Note if $E_{\beta}^+(T) \backslash( A \cup \{x_p\}) = \emptyset$ then $|E_{\beta}^+(T)\backslash L_1 | = p$, and we are done.  Suppose then that $x_{p+1}\in E_{\beta}^+(T) \backslash (A\cup L_1)$, $x_p \neq x_{p+1}$.  Let $V_2$ be a $p$-dimensional linear space containing $\{x_i\}_{i=1}^{p+1}$ (observe $V_2$ need not be the only such $p$-dimensional  linear subspace containing these points).  Choose $\alpha '$ such that $\frac{p}{p+1} < \alpha ' < \alpha $ and $\nu(T, x_i) > \frac{p^2 + p - \alpha '}{p(p+2)}$, and define the current $R$ as follows:  

$$R := \frac{(p+1)\alpha ' - p}{(p+1)\alpha '} [V_2] + \frac{p}{(p+1)\alpha '} T.$$

\noindent Note that $\|R\| = 1$ as well as

$$\nu(R, q_i) \geq \frac{p}{(p+1)\alpha'}\nu(T,q_i)> \frac{p}{p+1} \quad i=1,2$$

\noindent and

$$\nu(R, x_i) > \frac{(p+1)\alpha ' - p}{(p+1)\alpha '} + \frac{p}{(p+1)\alpha '} \frac{p^2 + p - \alpha '}{p(p+2)}$$

$$=\frac{(p+2)(p+1) - 1}{(p+2)(p+1)} - \frac{p}{(p+2)(p+1)\alpha'} > \frac{p}{p+1}, \quad i=1,\dots ,p+1.$$

\smallskip

Thus by Coman \cite[Theorem 1.2]{CT15}, it must be the case that there is a line containing three points of $\{q_1, q_2, x_1, \dots, x_{p+1}\}$, say $L_2$.  We now have to break our argument into two cases, depending on if $x_{p+1}$ is contained in $V_1$ or not.

\textit{Case 1:}  Suppose that $x_{p+1}\notin V_1$.  Note that $L_2$ cannot contain $3$ points of the set $A$ as those points are linearly independent.  Thus it must be the case that $L_2$ contains both $x_p, x_{p+1}$ as otherwise if $L_2$ only contains one of them, the other two points would be from the set $A$, which means either $x_p$ or $x_{p+1}$ would be in the span of $A$, which is a contradiction.  So $L_2$ contains $x_p, x_{p+1}$ and some $y \in A$.  Now note we can find a new point $x_{p+2} \in  E_{\beta}^+(T) \backslash (A\cup L_2)$, as otherwise $| E_{\beta}^+(T) \backslash L_2| = p$ and we would be done.  Let $B:=\{x_i\}_{i=1}^{p+2}$, and let $U_i$ be a $p$-dimensional linear space containing $B\backslash \{x_i\}$. Choose $\alpha '$ such that $\frac{p}{p+1} < \alpha ' < \alpha $ and $\nu(T, x_i) > \frac{p^2 + p - \alpha '}{p(p+2)}$ for $i=1, \dots, p+2$. We now consider a new current $S$ given by:

$$S := \frac{(p+1)\alpha ' - p}{(p+1)(p+2)\alpha '}\sum_{i=1}^{p+2} [U_i] + \frac{p}{(p+1)\alpha '} T$$

\noindent note that $\|S\| = 1$ as well as

$$\nu(S, q_i) > \frac{p}{p+1} \quad i=1,2$$

\noindent and

$$\nu(S, x_i) > \frac{(p+1)\alpha ' - p}{(p+1)(p+2)\alpha '}(p+1) + \frac{p}{(p+1)\alpha '} \frac{p^2 + p - \alpha '}{p(p+2)}$$
$$=\frac{(p+1)^2\alpha' - (p+1)p +p^2 + p - \alpha'}{(p+1)(p+2)\alpha'} = \frac{p}{p+1},\quad i=1,\dots ,p+2.$$

\noindent Thus by \cite[Theorem 1.2]{CT15}, there exists a complex line $L_3$ that will contain four points of $A\cup \{x_p, x_{p+1}, x_{p+2}\}$.  As the points in $A$ are in general position, $L_3$ can only contain at most two points of $A$.  If $L_3$ contains two points of $A$, then $L_3$ will also contain at least one of $x_p$ or $x_{p+1}$ which means that at least one of $x_p, x_{p+1}$ lies in $V_1$, which is impossible.  So $L_3$ can only contain one point of $A$ which means $L_3$ contains $x_p, x_{p+1}, x_{p+2}$.  But now that means $L_3 = L_2$, and this is a contradiction as $x_{p+2} \notin L_2$.  So no such point $x_{p+2}$ can exist, and $L_2$ is the line that satisfies the conclusion of the theorem.

\textit{Case 2:}  Suppose that $x_{p+1}\in V_1$.  As $L_2$ must contain three points of $A\cup \{x_p, x_{p+1}\}$, it must be the case that $L_2$ contains $x_{p+1}, x_j, x_k$ (note that $x_j, x_k$ may actually be the $q_i$, but that is irrelevant) as the points in $A$ are in general position and $x_p\notin \text{span}(A)$.  After reindexing, say that $L_2$ contains $x_1, x_2, x_{p+1}$.  Arguing as we did in the first case, we can find a new point $x_{p+2} \in E_{\beta}^+(T)\backslash L_2$ and a line $L_3$ containing four points of $A\cup \{x_p, x_{p+1}, x_{p+2}\}$. Note $L_3 \subset V_1$.  As the points of $A$ are in general position, and since $x_{p+2}\notin L_2$, it must be the case that $L_3$ contains $x_{p+1},x_{p+2}$ and (after reindexing) $x_3, x_4$.  But now observe that $L_2$ is the line that spans $x_1, x_{p+1}$, $L_3$ is the line that spans $x_3, x_{p+1}$, and $L_2\cap L_3 \neq \emptyset$, so we have a $2$-dimensional linear space containing $x_1, x_2, x_3$ and $x_4$, which is a contradiction as they are linearly independent.  Thus we cannot have such a point $x_{p+2}$, and $L_2$ is the desired line that satisfies the conclusion of our theorem.

\end{proof}

\noindent\textbf{Remark:} When $p=1$, we have that $\alpha > 1/2$ and that $\beta = (2-\alpha)/3$, which is exactly the version proved by Coman, \cite[Theorem 3.10]{C06}.

\bigskip

Consider now a positive closed bidimension $(p,p)$ current $T$ on $\mathbb{P}^n$.  We now show that if $T$ has a ``small" Lelong number at a sufficient number of points in a $p$-dimensional linear subspace $V$, then either $E_{\alpha}^+(T) \subset V$ or that the line $L$ satisfying the conclusion of Theorem 2.1 is contained in $V$.

\begin{Theorem}\label{T:mt1} Let $T$ be a positive closed current of bidimension $(p,p)$ on $\mathbb{P}^n$ with $\|T\|=1$, $\alpha \geq \frac{p}{p+1}$ and suppose there are points $x_1, \dots, x_{p+2} \in E_{1 - \frac{\alpha}{p}}$ such that $\{x_i\}_{i=1}^{p+2}$ span a $p$-dimensional linear subspace $V$.  Then either $E_{\alpha}^+(T)$ is contained in $V$ or there exists a complex line $L\subset V$ such that $| E_{\alpha }^+ (T) \backslash L | = p$. 
\end{Theorem}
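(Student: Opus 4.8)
The plan is to mimic the proof of Theorem~1.1: I would add a suitable multiple of the current of integration $[V]$ to $T$ so that the hypotheses of \cite[Theorem~1.2]{CT15} are met, and then read off the conclusion. If $E_\alpha^+(T)\subseteq V$ there is nothing to prove, so I would fix a point $y_0\in E_\alpha^+(T)\setminus V$ and try to produce a single line $L\subset V$ that omits at most $p$ points of $E_\alpha^+(T)$.

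For a parameter $\alpha'$ with $\alpha<\alpha'<\nu(T,y_0)$, set $b'=\frac{p}{(p+1)\alpha'}$ and consider
\[
R_{\alpha'}:=\Bigl(1-\tfrac{p}{(p+1)\alpha'}\Bigr)[V]+\tfrac{p}{(p+1)\alpha'}\,T .
\]
Because $\|[V]\|=1$ and $\alpha'>\alpha\ge p/(p+1)$ forces $0<b'<1$, this is a positive closed bidimension $(p,p)$ current of unit mass. Two Lelong number estimates drive everything. First, since $\nu(T,x_i)\ge 1-\alpha/p>1-\alpha'/p$ and $\nu([V],x_i)=1$, additivity of Lelong numbers gives $\nu(R_{\alpha'},x_i)>(1-b')+b'(1-\alpha'/p)=1-b'\alpha'/p=\frac{p}{p+1}$, so each $x_i\in E_{p/(p+1)}^+(R_{\alpha'})$. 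Second, any $z$ with $\nu(T,z)>\alpha'$ has $\nu(R_{\alpha'},z)\ge b'\nu(T,z)>b'\alpha'=\frac{p}{p+1}$, so $E_{\alpha'}^+(T)\subseteq E_{p/(p+1)}^+(R_{\alpha'})$. The point of insisting on $\alpha'>\alpha$ rather than $\alpha'=\alpha$ is that it makes the estimate for the $x_i$ strict; this is exactly why the threshold $1-\alpha/p$ is the natural one.

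Now I would apply \cite[Theorem~1.2]{CT15} to $R_{\alpha'}$. If $E_{p/(p+1)}^+(R_{\alpha'})$ were contained in a $p$-dimensional subspace $W$, then the $x_i$ would span $V\subseteq W$, whence $W=V$; but $y_0\in E_{\alpha'}^+(T)\subseteq E_{p/(p+1)}^+(R_{\alpha'})$ while $y_0\notin V$, a contradiction. Hence the finite alternative holds: there is a line $L_{\alpha'}$ with $|E_{p/(p+1)}^+(R_{\alpha'})\setminus L_{\alpha'}|=p$. Since at most $p$ of the $p+2$ points $x_i$ can lie off $L_{\alpha'}$, at least two of them lie on it, so $L_{\alpha'}\subset V$; and then $|E_{\alpha'}^+(T)\setminus L_{\alpha'}|\le p$, with every point of $E_{\alpha'}^+(T)$ off $V$ necessarily among the omitted points.

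Finally I would let $\alpha'\downarrow\alpha$. A sequence $\alpha_k\downarrow\alpha$ in $(\alpha,\nu(T,y_0))$ produces lines $L_k\subset V$ with $|E_{\alpha_k}^+(T)\setminus L_k|\le p$; since the lines contained in $V$ form a compact family I would pass to a convergent subsequence $L_k\to L\subset V$. Using $E_\alpha^+(T)=\bigcup_k E_{\alpha_k}^+(T)$ together with the fact that a point lying off the closed set $L$ must lie off $L_k$ for all large $k$, one checks that at most $p$ points of $E_\alpha^+(T)$ can lie off $L$, which is the desired conclusion (the exact count $=p$ being read off as in \cite[Theorem~1.2]{CT15}). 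I expect this last step to be the main obstacle: the line $L_{\alpha'}$ delivered by \cite[Theorem~1.2]{CT15} a priori depends on $\alpha'$, and the crux is to show it stabilizes as $\alpha'\downarrow\alpha$ so that the ``$\le p$ points off a line'' property passes from each $E_{\alpha'}^+(T)$ to $E_\alpha^+(T)$ itself.
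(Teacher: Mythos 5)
Your proposal is correct, and its first half---the auxiliary current $R_{\alpha'}=\bigl(1-\tfrac{p}{(p+1)\alpha'}\bigr)[V]+\tfrac{p}{(p+1)\alpha'}T$, the two Lelong-number estimates, and the first application of \cite[Theorem 1.2]{CT15} producing a line $L_{\alpha'}\subset V$ with $|E_{\alpha'}^+(T)\setminus L_{\alpha'}|\le p$---is exactly the paper's argument (the paper, writing $q_1$ for your $y_0$, likewise takes $\alpha'>\alpha$ with $\nu(T,q_1)>\alpha'$ precisely so that $\nu(T,x_i)\ge 1-\alpha/p>1-\alpha'/p$ is strict). Where you genuinely diverge is the passage from $E_{\alpha'}^+(T)$ to $E_{\alpha}^+(T)$. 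The paper takes no limit: it shows by contradiction that the line $L$ from the first application already works for $E_{\alpha}^+(T)$. If some $q_2\in E_{\alpha}^+(T)\setminus L$ existed other than $q_1$ and the $x_i$, one re-chooses $\alpha'$ below both $\nu(T,q_1)$ and $\nu(T,q_2)$ and applies \cite[Theorem 1.2]{CT15} a second time, now to the $p+4$ points $q_1,q_2,x_1,\dots,x_{p+2}$; the resulting line must contain four of them, hence two distinct $x_i$, hence lies in $V$ and so contains at least three of the $x_i$. But $p+2$ points spanning a $p$-dimensional space admit at most one collinear triple and no collinear quadruple, so that line would have to be $L$ itself while containing a forbidden fourth point ($q_2$ or a fourth $x_i$), a contradiction. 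This pins $L$ down explicitly as the line through the unique collinear triple among the $x_i$, and the dependence of the auxiliary current on $\alpha'$ never matters because each application involves only finitely many prescribed points. Your alternative---$\alpha_k\downarrow\alpha$, a subsequential limit $L_k\to L$ in the compact family of lines in $V$, and closedness of the incidence set $\{(z,\ell):z\in\ell\}$---is also a complete proof: if $z_1,\dots,z_{p+1}$ were distinct points of $E_{\alpha}^+(T)\setminus L$, then for all large $k$ they would lie in $E_{\alpha_k}^+(T)$ and avoid $L_k$, contradicting $|E_{\alpha_k}^+(T)\setminus L_k|\le p$. In particular, the ``stabilization'' you flag as the main obstacle is not actually needed---a subsequential limit suffices---so your last step is sound as sketched. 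The trade-off: the paper's route is more explicit (the line is determined by the geometry of the $x_i$ alone) and purely combinatorial, while yours avoids the collinearity analysis at the price of a compactness argument; and both routes, like the paper's own proof, really establish $|E_{\alpha}^+(T)\setminus L|\le p$ rather than the literal equality $=p$ in the statement.
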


\begin{proof}

Let $\{x_i\}$ and $V$ be as stated above, and suppose there exists a point $q_1\in E_{\alpha}^+(T)\backslash V$, noting if no such point exists, we are already done.  Choose $\alpha'>\alpha$ such that $\nu(T,q_1) > \alpha'$, and thus $\nu(T,x_i) \geq 1-\frac{\alpha}{p} > 1-\frac{\alpha'}{p}$.  We consider now the current $R$ given by

$$R = \frac{(p+1)\alpha' -p}{(p+1)\alpha'} [V] + \frac{p}{(p+1)\alpha'} T$$

\noindent and observe that $\|R\| = 1$, $\nu(R,q_1) > \frac{p}{p+1}$ and

$$\nu(R, x_i) >  \frac{(p+1)\alpha' -p}{(p+1)\alpha'} + \frac{p - \alpha'}{(p+1)\alpha'} =\frac{p}{p+1}.$$

\noindent Thus by \cite[Theorem1.2]{CT15}, either $\{q_1, x_1, \dots, x_{p+2}\}$ are in a $p$-dimensional linear subspace or there is a line $L$ such that $|E_{\frac{p}{p+1}}^+ (R)\backslash L| = p$.  Since $\{x_i\}_{i=1}^{p+2}$ uniquely define $V$, and $q_1\notin V$, it must be the case that there is a line $L$ containing exactly $3$ of the $p+3$ points.  As $L$ must contain two points in $V$, it must be the case that $L\subset V$ and thus $q_1\notin L$, and say after reindexing that $L$ contains the points $x_1, x_2,$ and $x_3$.

We now show via contradiction that $L$ is the line satisfying the conclusion of the theorem.  Suppose there is $q_2\in E_{\alpha}^+(T)\backslash L$, $q_2 \neq q_1, x_i$.  Choose $\alpha' > \alpha$ such that $\nu(T, q_1) > \alpha'$ and $\nu(T,q_2) > \alpha'$.  Using the current $R$ as above we get $\nu(R,q_i)>\frac{p}{p+1}$ and $\nu(R,x_i)>\frac{p}{p+1}$.  Again we apply \cite[Theorem 1.2]{CT15} and since we cannot contain the $p+4$ points in $V$, we get that there is a line $L_1$ that must contain $4$ of the $p+4$ points. As two of those points must be in $V$, $L_1\subset V$ and thus $L_1$ must contain at least three of the $x_i$ points.  As the $x_i$ span $V$, the only three collinear points in $V$ are $x_1,x_2,x_3$, thus it is the case that $L_1 = L$, but then either $q_2\in L$ which cannot happen, or $L$ contains $4$ of the $x_i$ points, which contradicts the fact that they span $V$.  Thus no such $q_2$ can exist, and $L$ is the line that satisfies the conclusion.

\end{proof}

\noindent \textbf{Remark:}  When $p=1$, the previous theorem is exactly Theorem 3.12 from \cite{C06}.

\bigskip

We close this section by looking at some examples that will demonstrate the necessity of the assumptions of Theorem 1.1.

\bigskip

\noindent \textbf{Example:}  We will first show that we need two points with Lelong number larger than $\frac{p}{p+1}$.  To see this, let $A:= \{q, x_1, x_2,\dots x_{p+1} \}$ be linearly independent points in $\mathbb{P}^n$, and let $V_j : = \text{span}(A\backslash \{x_j\})$.  Further, let $L_j := \cap_{i=1, i\neq j}^{p+1} V_i$, so $L_j$ is the line spanning $q$ and $x_j$.  Consider the following current:

$$T = \frac{1}{p+1}\sum_{i=1}^{p+1} [V_i]$$

\noindent and note $\nu(T, q) = 1$, and that $q$ is the only point where $T$ has Lelong number strictly larger than $\frac{p}{p+1}$.  Also note along any line $L_j$, $T$ has Lelong number $\frac{p}{p+1}$ and given any $V_i$, there is some line $L_k$ not contained in $V_i$.  Since $\beta < \frac{p}{p+1}$, $E_{\beta}^+(T)$ is not contained in a $p$-dimensional linear subspace, and no matter what line $L$ we consider, $|E_{\beta}^+(T)\backslash L|= \infty$.

\bigskip

\noindent \textbf{Example:}  We now consider the sharpness of our bound $\beta$.  We can only show this for bidimension $(1,1)$ currents, i.e. when $p=1$. Since $p=1$, $\beta = \frac{2-\alpha}{3}$.  Let $L_i$, $i=1,2,3,4$ be complex lines and $q_1, q_2, p_1, p_2$ be points such that $L_1\cap L_3 \cap L_4 =\{q_1\}$, $L_1\cap L_2 = \{q_2\}$, $L_2\cap L_3 = \{p_1\}$, and $L_2\cap L_4 = \{p_2\}$.  We consider the following current

$$T = \frac{7}{15}[L_1] + \frac{6}{15}[L_2] + \frac{1}{15}\sum_{i=3}^{4} [L_i]$$

 \noindent and note $\|T\| = 1$.  Now calculating the Lelong numbers at each of the four previously mentioned points we have:

$$\nu(T, q_1) = \frac{9}{15},\quad \nu(T,q_2) = \frac{13}{15}, \quad \nu(T, p_1) = \frac{7}{15},\quad \nu(T,p_2) = \frac{7}{15}.$$

\smallskip

\noindent  Let $\alpha = \frac{9}{15} = \frac{3}{5}$, noting that $\nu(T, q_i) \geq \alpha$.  Further, $\beta = \frac{7}{15}$, and we observe that $E_{\beta}(T) = L_1\cup\{p_1,p_2\}$, and thus $|E_{\beta}(T)\backslash L_1| = 2$ and $|E_{\beta}(T)\backslash L| = \infty$ for any line $L\neq L_1$.  Finally we observe that $E_{\beta}^+ (T) = \{q_1,q_2\}$, and $|E_{\beta}^+ (T) \backslash L_1| = 0$, showing that the parameter $\beta$ is the best it can be.

\bigskip

We now look at one last interesting example.  In the statement of \cite[Theorem 1.2]{CT15}, Coman mentions that if $E_{\alpha}^+(T)$ is not contained in a $p$-dimensional linear subspace, then the upper level set must be finite.  This however is no longer true in Theorem 1.1 for $E_{\beta}^+(T)$, as we see below.

\bigskip

\noindent \textbf{Example:} Suppose that $A_i$, $i=1,\dots,p+2$ are $p$-dimensional linear subspaces of $\mathbb{P}^n$, $n > p$, such that $L = \bigcap_{i=1}^{p} A_i$.  Let $V$ be the $(p-1)$-dimensional linear space given by $A_{p+1}\cap A_{p+2}$.  Let $\{q_i\} = L \cap A_{p+i}$, $i=1,2$, and $\{x_i\} =  ( \bigcap_{j\neq i, j=1}^{p} A_j  )\cap V$, $i=1,\dots ,p$.  Finally we suppose the $A_i$ are arranged so that $\{q_1, q_2, x_1, \dots, x_p\}$ cannot be contained in a $p$-dimensional linear subspace.  Consider now the following current:

$$T = \frac{1}{p+1} \sum_{i=1}^{p} [A_i] + \frac{1}{2(p+1)} ([A_{p+1}] + [A_{p+2}]).$$

\noindent  It is clear that $\|T\| = 1$, and that $\nu(T, q_i) > \frac{p}{p+1}.$  We note then for any $\alpha$ such that  $\nu(T, q_i) \geq \alpha > \frac{p}{p+1}$, we have $\beta < \frac{p}{p_+1}$.  We now observe that $\nu(T, x_i) = \frac{p}{p+1}$, and give any point $y\in L$, $\nu(T,y) = \frac{p}{p+1}$.  Thus we have that $E_{\beta}^+(T)$ is not contained in a $p$-dimensional linear subspace, $|E_{\beta}^+(T)\backslash L| = p$, however $E_{\beta}^+(T)$ is not finite.

\section{Proof of Theorem 1.2}

Our goal in this section is to attempt to generalize Theorem 2.3 from $\mathbb{P}^2$ to $\mathbb{P}^n$.  In the original proof of Theorem 2.3, we relied heavily on \cite[Theorem 1.2]{C06}, which is only valid in $\mathbb{P}^2$.  Later attempts to generalize \cite[Theorem 1.2]{C06} have yielded good results for all bidimensions except bidimension $(1,1)$, which is sadly the case we would need (see \cite{CT15} for more details).  If we have the situation in which our current has a ``large" Lelong number at four points that are all on a line, then we can avoid the necessity of using  \cite[Theorem 1.2]{C06}, and can generalize it to $\mathbb{P}^n$ with ease.  We now prove Theorem 1.2.

\begin{proof}

Let $L_1$ be the line containing $q_1, q_2, q_3$, and $q_4$. By Siu's decomposition theorem \cite{Siu74} we have that 

$$T = a [L_1 ] + R,$$
\smallskip

\noindent where $a$ is the generic Lelong number of $T$ along $L_1$.  Note that $\| R \| = 1 - a $ and $\nu (R, q_i) \geq \alpha - a$. By \cite{Me98} there is a bidegree $(1,1)$ current $S$ such that $\|S\| = \|R\|$ and $\nu(S,x) = \nu(R,x)$ at all $x$.   Proposition 2.5 shows that there exists a current $S'$ such that $ \|S'\| = 1 - a $, $S'$ is smooth where $S$ has Lelong number $0$, and $\nu (S', q_i) > \alpha  - a -\epsilon$.  By \cite{FS95}, $S' \wedge [L_1] $ is a well defined measure.  Now we have

$$ 1-a = \int_{\mathbb{P}^{n }} S' \wedge [L_1] \geq \sum_{i=1}^{4} \nu (S'\wedge [L_1] , q_i )$$

$$ \geq \sum_{i=1}^{4} \nu (S',q_i) \nu ([L_1], q_i) >4\alpha  - 4 a - 4\epsilon, $$

\smallskip

\noindent where the second inequality follows from \cite[Corollary 5.10]{D93} and the final inequality follows as $\nu([L_1], q_i) = 1$.  So we have that $a \geq \frac{4\alpha - 1}{3}$ as $\epsilon \searrow 0$.

 Define a new current:

$$R' = \frac{R}{1- a} $$

\smallskip

 \noindent and note $R'$ is a bidimension $(1,1)$ current with $\| R' \| = 1$.  Now we have for $x\in E_{\beta}^+ (T)\backslash L_1$:

$$\nu (R', x) > \frac{2}{3} \frac{1-\alpha}{1-a} \geq \frac{2 - 2\alpha}{4-4\alpha} = \frac{1}{2} . $$

\smallskip

Coman's result, \cite[Theorem 1.1]{C06} shows that there is a point $y$ and a line $L_2$ such that $E_{\beta}^+ (T)\backslash L_1 \subset L_2 \cup \{y\}$, and the theorem is proven.
\end{proof}

\smallskip

\noindent \textbf{Remark:} For $n\geq 3$, the two lines need not intersect.

\smallskip

Using Theorem 1.2 combined with \cite[Theorem 3.3]{CT15}, we can get the following partial result:

\begin{Theorem}\label{T:mt1} Let $T$ be a positive closed current of bidimension $(1,1)$ on $\mathbb{P}^n$ with $\|T\|=1$, $\alpha > 2/5$ and $\beta = \frac{2}{3} (1- \alpha)$.  Assume we have one of the following:

\begin{itemize}

\item[i)] $\alpha < 1/2$ and $E_{\alpha}^+(T)$ contains $4$ collinear points, or

\item[ii)] $\alpha < 1/2$, $|E_{\alpha}^+(T)|> 37$, or

\item[iii)] $\alpha \geq 1/2$ and $|E_{\alpha}^+(T)|> 4$

\end{itemize}

\noindent Then there is a curve $C$ in $\mathbb{P}^n$ of degree at most $2$ such that $|E_{\beta}^+(T)\backslash C|\leq 1$.

\end{Theorem}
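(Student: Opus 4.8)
The plan is to dispatch each of the three hypotheses by reducing it, wherever possible, to a single favorable configuration: four collinear points of $E_\alpha^+(T)$, at which point Theorem \ref{P:33} applies verbatim and outputs two lines $L_1,L_2$ with $|E_\beta^+(T)\setminus(L_1\cup L_2)|\le 1$, so that $C=L_1\cup L_2$ (a curve of degree at most $2$) is the desired curve. The only case that resists this reduction is an exotic sub-case of (ii), which I handle by a mass-on-a-curve estimate rather than by Theorem \ref{P:33}. In that sub-case the output curve $C$ will be an irreducible conic, and the bound $|E_\beta^+(T)\setminus C|\le 1$ will in fact be improved to containment $E_\beta^+(T)\subset C$.

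Case (i) is immediate: the four collinear points lie in $E_\alpha^+(T)$, hence have $\nu(T,q_i)>\alpha>2/5$, so Theorem \ref{P:33} finishes it directly. For case (iii), since $\alpha\ge 1/2$ I would invoke Coman's Theorem \ref{T:21} to obtain a line $L$ with either $E_\alpha^+(T)\subset L$ or $|E_\alpha^+(T)\setminus L|\le 1$; because $|E_\alpha^+(T)|>4$, at least four points of $E_\alpha^+(T)$ are forced onto $L$, and Theorem \ref{P:33} again concludes the case.

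For case (ii), since $\alpha>2/5$ we have $E_\alpha^+(T)\subset E_{2/5}^+(T)$, so $|E_{2/5}^+(T)|>37$ and \cite[Theorem 3.3]{CT15} produces a curve $C_0$ of degree at most $2$ with $|E_{2/5}^+(T)\setminus C_0|\le 1$; thus at least $37$ points of $E_\alpha^+(T)$ lie on $C_0$. If $C_0$ has a linear component, that component must carry at least four of these points (a single line carries all $37$, while a pair of lines forces at least $\lceil 37/2\rceil$ of them onto one line), so I reduce once more to Theorem \ref{P:33}.

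The remaining possibility, $C_0$ an irreducible (hence smooth) conic, is the genuine obstacle, since such a conic contains no four collinear points and Theorem \ref{P:33} is unavailable. Here I would adapt the intersection estimate from the proof of Theorem \ref{P:33}: take the Siu decomposition $T=b[C_0]+R$ with $b$ the generic Lelong number of $T$ along $C_0$, transfer $R$ to a bidegree $(1,1)$ current via \cite{Me98}, regularize it using Proposition \ref{P:25}, and pair the result with $[C_0]$ (legitimate by \cite{FS95}). Applying \cite[Corollary 5.10]{D93} together with $\nu([C_0],q_i)=1$ at the smooth points $q_i$ and the total mass $\int(\cdot)\wedge[C_0]=2(1-2b)$ forces $b$ to be large; a short computation using only $\alpha>2/5$ then yields $\|R\|=1-2b<\beta$. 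Since any $y\notin C_0$ satisfies $\nu(T,y)=\nu(R,y)\le\|R\|<\beta$, no point of $E_\beta^+(T)$ can lie off $C_0$, so $E_\beta^+(T)\subset C_0$ and $C=C_0$ works. Setting up this estimate is the main technical point: one must verify $1-2b<\beta$ uniformly, splitting off the easy sub-case $b\ge\alpha$ (where $1-2b\le1-2\alpha<\beta$ directly) from the sub-case $b<\alpha$ in which Proposition \ref{P:25} is needed to produce the strictly positive Lelong thresholds $\alpha-b$ that drive the intersection inequality.
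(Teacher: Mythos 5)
Your proposal is correct, and for cases (i) and (iii), as well as for the opening of case (ii) — invoking \cite[Theorem 3.3]{CT15} to get a degree-$\leq 2$ curve carrying at least $37$ points of $E_{\alpha}^+(T)$, then pigeonholing a linear component down to four collinear points — you follow essentially the same route as the paper. Where you genuinely diverge is the irreducible-conic sub-case of (ii). The paper argues there by contradiction: assuming two points $p_1, p_2 \in E_{\beta}^+(T)\setminus C_1$, it builds the auxiliary current $R = \frac{5\alpha'-2}{5\alpha'}[L_{12}] + \frac{2}{5\alpha'}T$ (with $L_{12}$ the line through $p_1,p_2$ and $2/5 < \alpha' < \alpha$ chosen so that $\nu(T,p_i) > \frac{2}{3}(1-\alpha')$), verifies $\nu(R,\cdot) > 2/5$ at all the relevant points, applies \cite[Theorem 3.3]{CT15} a second time to get a conic $C_2$, and reaches a contradiction via \cite[Proposition 0]{EH87} and Bezout, since $C_2$ would have to share more than four points with the irreducible plane conic $C_1$ while containing some $p_i \notin C_1$. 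You instead transplant the intersection-theoretic machinery from the paper's proof of Theorem 1.2, with $[L_1]$ replaced by $[C_0]$: writing $T = b[C_0] + R$ and pairing the regularized residual against $[C_0]$ gives $2(1-2b) \geq 37(\alpha - b)$, hence $b \geq \frac{37\alpha - 2}{33}$ and $\|R\| = 1-2b \leq \frac{37 - 74\alpha}{33}$, which is $< \frac{2}{3}(1-\alpha)$ exactly when $\alpha > \frac{15}{52}$, comfortably implied by $\alpha > \frac{2}{5}$; together with $\nu(R,y) \leq \|R\|$ for $y \notin C_0$ and your easy sub-case $b \geq \alpha$ (where $1-2\alpha < \beta$ needs only $\alpha > \frac{1}{4}$), this yields $E_{\beta}^+(T) \subset C_0$. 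Both arguments are sound; yours buys a strictly stronger conclusion in this sub-case (containment, with no exceptional point) and avoids a second appeal to \cite[Theorem 3.3]{CT15}, at the cost of redeploying the Siu/Meo/regularization/Forn\ae ss--Sibony/Demailly apparatus and of one point you should make explicit: the wedge $S' \wedge [C_0]$ is well defined by \cite{FS95} because the Siu residual $R$ has generic Lelong number $0$ along $C_0$, so by Proposition 2.5 the current $S'$ is smooth at generic points of $C_0$ (the same tacit step as in the paper's proof of Theorem 1.2). The paper's Bezout argument, by contrast, stays entirely at the level of upper level sets and is more combinatorial.
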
  

\begin{proof} 

(i)  This follows immediately from Theorem 1.2.

(ii) By Theorem 2.4, we know that there is a curve $C_1$ such that  $|E_{\alpha}^+(T)\backslash C_1|\leq 1$.  If $C_1$ omits a point of $E_{\beta}^+(T)$, call it $p_1$, then we can find a point $p_2 \in E_{\beta}^+(T)\backslash C_1$, otherwise we are done.  If $C_1$ omits no points of $E_{\alpha}^+(T)$, then again we can find $p_1,p_2\in E_{\beta}^+(T)\backslash C_1$, otherwise we are done.  In either case, note that $C_1$ must contain at least $38$ points of $E_{\beta}^+(T)$.  We consider the cases of if $C_1$ is an irreducible degree $2$ curve, or not.  

\textit{Case 1:}  First suppose $C_1$ is an irreducible degree $2$ curve.  Let $\alpha '$ be such that $\alpha > \alpha ' > 2/5$, and $\nu (T, p_{i}) > \frac{2}{3} (1- \alpha ') > \beta$, and let $L_{12}$ be the line spanned by $p_1, p_2$. Define a current R as follows:

$$R = \frac{5\alpha ' - 2}{5\alpha '}[L_{12}] + \frac{2}{5\alpha '} T $$ 

\smallskip

\noindent and note $\|{R}\| = 1 $.  We have the following inequalities:

$$\nu (R, q) > \frac{2}{5\alpha '} \alpha > \frac{2}{5}, \; q\in E_{\alpha}^+(T)$$ 

\noindent and

$$ \nu (R, p_{i}) > \frac{5\alpha ' - 2}{5\alpha '} + \frac{4-4\alpha '}{15\alpha '} > \frac{2}{5}, \; i=1,2. $$

\smallskip

\noindent So by the Theorem 2.4, we can find a conic $C_2$ such that $|E_{\alpha}^+(R)\backslash C_2| \leq 1$. Since $C_1$ is irreducible, it is a plane conic by \cite[Proposition 0]{EH87}. If $C_2$ is irreducible as well, then since $|C_1\cap C_2| > 4$, Bezout's theorem show that $C_1 = C_2$, which is impossible as this means one of the $p_i$ are now on $C_1$. If $C_2$ is reducible, then it decomposes into two lines, and can only intersect $C_1$ at most four times, which is a contradiction.

\textit{Case 2:}  If $C_1$ is a reducible conic then $C_1 = L_1\cup L_2$, for some pair of complex lines.  But note then that for one of the lines, say w.l.o.g. $L_1$, $|L_1 \cap E_{\alpha}^+(T)| > 4$, and we have at least four collinear points in $E_{\alpha}^+(T)$, so we are back to situation (i).

(iii)  By \cite[Theorem 1.1]{C06} there is a line $L$ such that $|E_{1/2}^+(T)\backslash L| \leq 1$, so $L$ contains at least four points of $E_{\alpha}^+(T)$, and we are done by Theorem 1.2. 

\end{proof}

\smallskip

\noindent \textbf{Remark:} The span of two non-concurrent lines has dimension 3, which is the reason that the bidimension $(1,1)$ case did not generalize into $\mathbb{P}^n$ (see \cite{CT15} Theorem 1.3, Proposition 3.2, and the remarks following Proposition 3.2 for more details).  We close with the following open question.  Suppose now that $T$ is a positive closed bidimension $(1,1)$ current on $\mathbb{P}^n$.  If we allow for a pair of non-concurrent lines, does there exist a degree 2 curve $C$ such that $|E_{2/5}^+(T)\backslash C| \leq 1$?

\end{document}